\documentclass[11pt]{amsart}
\usepackage[margin=1in]{geometry}
\usepackage[T1]{fontenc}
\usepackage[utf8]{inputenc}
\usepackage{lmodern}
\usepackage{amsmath,amssymb,amsfonts,amsthm,mathtools}
\usepackage{enumitem}
\usepackage{microtype}
\usepackage[colorlinks=true,linkcolor=blue,citecolor=blue,urlcolor=blue]{hyperref}

\setlist[itemize]{leftmargin=2em}
\setlist[enumerate]{leftmargin=2em}

\newtheorem{theorem}{Theorem}[section]
\newtheorem{proposition}[theorem]{Proposition}
\newtheorem{corollary}[theorem]{Corollary}
\newtheorem{lemma}[theorem]{Lemma}

\theoremstyle{definition}
\newtheorem{definition}[theorem]{Definition}
\newtheorem{example}[theorem]{Example}
\theoremstyle{remark}
\newtheorem{remark}[theorem]{Remark}

\DeclareMathOperator{\sepd}{sep}

\DeclareMathOperator{\Poisson}{Poisson}

\newcommand{\E}{\mathbb{E}}
\newcommand{\Pbb}{\mathbb{P}}
\newcommand{\R}{\mathbb{R}}

\newcommand{\fall}[2]{(#1)_{#2}}
\newcommand{\normTV}[1]{\left\lVert #1\right\rVert_{\mathrm{TV}}}

\title[Growing-block colored top-m-to-random]{Cutoff profiles for colored top-\texorpdfstring{$m$}{m}-to-random shuffles with growing block size}

\author[I. Z. Feng]{Ivan Z. Feng}
\address{Department of Mathematics, University of Southern California, Los Angeles, CA 90089, USA}
\email{ifeng@usc.edu}
\urladdr{https://dornsife.usc.edu/ivan/}

\date{June 28, 2026}

\subjclass[2020]{60J10; 60C05; 05E18}
\keywords{card shuffling, colored permutations, cutoff, coupon collector, group drawings, total variation, separation distance}

\begin{document}

\begin{abstract}
We study the \(p\)-colored top-\(m\)-to-random shuffle on \(C_p\wr S_n\) when the block size \(m=m_n\) grows with \(n\).  Let \(E_{k_n}^{(m_n)}\) be the number of labels never touched after \(k_n\) independent uniform \(m_n\)-subset draws, and set
\[
        b_n=n-m_n,\qquad q_n=\frac{b_n}{n},\qquad
        \lambda_n=nq_n^{k_n}.
\]
We prove that if \(\lambda_n\to\lambda\in(0,\infty)\) and \(b_n\to\infty\), then
\[
        E_{k_n}^{(m_n)}\Rightarrow\Poisson(\lambda).
\]
Combining this with the exact nested-set reduction for colored top-\(m\)-to-random shuffles, we obtain growing-block total variation, separation, and integrated likelihood-ratio profiles.  In particular, if \(Q_{n,p}^{(m_n)}\) is the one-step law and \(U_{n,p}\) is uniform on \(C_p\wr S_n\), then
\[
\sepd\!\left((Q_{n,p}^{(m_n)})^{*k_n},U_{n,p}\right)\to
\begin{cases}
1-e^{-\lambda}(1+\lambda),&p=1,\\
1-e^{-\lambda},&p\ge2.
\end{cases}
\]
The criterion applies to small blocks, proportional blocks, and near-full blocks.
\end{abstract}

\maketitle

\section{Introduction}

The top-to-random shuffle is a classical example in which card-shuffling mixing is governed by a coupon-collector mechanism. Diaconis, Fill, and Pitman determined the cutoff profile for the classical top-to-random shuffle on $S_n$ \cite{DFP}.  Nakano, Sadahiro, and Sakurai later studied top-to-random shuffles on colored permutations, or equivalently on the wreath product $C_p\wr S_n$ \cite{NSS}.  In \cite{FengProfiles}, the author studied the $p$-colored top-$m$-to-random shuffle with $m$ fixed and obtained exact formulas and cutoff profiles for total variation, separation, $L^q(U)$, $\chi^2$, relative entropy, and $L^\infty(U)$.

Motivated by the group-drawing coupon-collector regimes of Berend and Sher~\cite{BerendSher}, we let the block size \(m=m_n\) vary with \(n\).  Since the exact algebraic reduction of \cite{FengProfiles} holds for every \(1\le m\le n\), the remaining asymptotic problem is to understand the untouched-label statistic \(E_k^{(m)}\): after \(k\) independent uniform \(m\)-subset draws from \([n]\), how many labels are never drawn?

This is the classical collector's problem with group drawings. Stadje studied the model in \cite{Stadje}; related Poisson-approximation background includes the survey of Chatterjee, Diaconis, and Meckes \cite{CDM}, Schilling and Henze's Poisson limit theorems \cite{SchillingHenze}, Betken and Th\"ale's Stein-method bounds \cite{BetkenThale}, and recent asymptotic work of Berend and Sher \cite{BerendSher}. For our application we need a short, flexible Poisson criterion that remains valid when $m=m_n$ ranges from $o(n)$ to a positive fraction of $n$, and even when $m_n$ is very close to $n$.

Let $m_n\in\{1,\ldots,n-1\}$, let $b_n=n-m_n$, and let $E_n$ denote the number of labels never selected after $k_n$ independent uniform $m_n$-subset drawings.  Equivalently,
\[
        E_n=\left|B_1\cap\cdots\cap B_{k_n}\right|,
\]
where $B_1,\ldots,B_{k_n}$ are independent uniform $b_n$-subsets of $[n]$. When \(k_n=0\), we use the convention \(E_n=n\). The natural clock is the expected number of untouched labels,
\[
        \lambda_n
        :=
        \E[E_n]
        =
        n\left(\frac{b_n}{n}\right)^{k_n}.
\]
The non-microscopic-complement condition \(b_n\to\infty\) is the clean critical-regime hypothesis. Equivalently, when \(\lambda_n\to\lambda\in(0,\infty)\), it is the same as
\[
        \gamma_n:=\frac{k_nm_n}{n(n-m_n)}\to0.
\]
The quantity \(\gamma_n\) measures the cumulative dependence caused by sampling without replacement inside each round. We use the following Poisson limit.

\begin{theorem}[Variable-block group-drawing Poisson limit]\label{thm:coupon-poisson-intro}
Assume
\[
        \lambda_n=n\left(\frac{n-m_n}{n}\right)^{k_n}\longrightarrow \lambda\in(0,\infty),
        \qquad
        b_n=n-m_n\longrightarrow\infty.
\]
Then
\[
        E_n\Rightarrow \Poisson(\lambda).
\]
Equivalently, for every fixed integer $u\ge0$,
\[
        \Pbb(E_n=u)\longrightarrow e^{-\lambda}\frac{\lambda^u}{u!}.
\]
\end{theorem}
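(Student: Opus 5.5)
We will use the method of factorial moments: it suffices to show that for every fixed integer $r\ge1$,
\[
        \E\bigl[\fall{E_n}{r}\bigr]\longrightarrow \lambda^r,
\]
because the Poisson law is determined by its moments. For distinct labels $i_1,\ldots,i_r$, the event that all of them survive means they lie in every $B_j$. By independence of the draws and exchangeability,
\[
        \E\bigl[\fall{E_n}{r}\bigr]
        =\fall{n}{r}\left(\frac{\fall{b_n}{r}}{\fall{n}{r}}\right)^{k_n}.
\]
Indeed, a uniform $b_n$-subset contains a fixed $r$-set with probability $\binom{n-r}{b_n-r}/\binom{n}{b_n}=\fall{b_n}{r}/\fall{n}{r}$. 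This identity is exact, so the whole argument reduces to an asymptotic estimate of this product.

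We write
\[
        \frac{\fall{b_n}{r}}{\fall{n}{r}}=q_n^r\prod_{s=0}^{r-1}\frac{1-s/b_n}{1-s/n}.
\]
The main term is $\fall{n}{r}q_n^{rk_n}=\fall{n}{r}\,(\lambda_n/n)^r\to\lambda^r$. It therefore remains to show that
\[
        k_n\sum_{s<r}\Bigl[\log(1-s/b_n)-\log(1-s/n)\Bigr]\to0.
\]
Since $b_n\to\infty$, each bracket equals $-s(1/b_n-1/n)+O(s^2/b_n^2)$. Here
\[
        \frac1{b_n}-\frac1n=\frac{m_n}{nb_n},
\]
so the correction is $O(r^2\gamma_n)+O(r^3k_n/b_n^2)$, where $\gamma_n=k_nm_n/(nb_n)$. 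This is where the hypothesis $b_n\to\infty$ is used.

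The key step, and the main obstacle, is showing $k_n/b_n\to0$ (which gives both $\gamma_n\to 0$, since $m_n/n\le1$, and $k_n/b_n^2\to0$). We use $\lambda_n\to\lambda$, which gives
\[
        k_n\log(n/b_n)=\log n-\log\lambda_n=\log n+O(1).
\]
We split into two cases. If $b_n\ge n/2$, then $\log(n/b_n)\ge m_n/n$, so $k_n\le C n\log n/m_n$. Since $b_n\asymp n$, we get $k_n/b_n\lesssim \log n/m_n$, but this need not be small when $m_n$ is bounded. In that regime we argue directly instead: $\gamma_n=k_nm_n/(nb_n)\lesssim k_n m_n/n^2\lesssim \log n/n\to0$, and $k_n/b_n^2\lesssim n\log n/n^2\to0$. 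So in this case we bound $\gamma_n$ and $k_n/b_n^2$ rather than $k_n/b_n$; this is all the expansion requires. If $b_n<n/2$, then $\log(n/b_n)\ge\log 2$, so $k_n=O(\log n)$. Moreover $b_n/n=(\lambda_n/n)^{1/k_n}$ gives $\log(n/b_n)=(\log n+O(1))/k_n$, so $b_n= n\,e^{-(\log n+O(1))/k_n}$. We then need $k_n=o(b_n)$, i.e.\ $\log n\cdot e^{\log n/k_n}=o(n)$ up to constants. If $k_n$ is bounded, then $b_n\to\infty$ together with $b_n^{k_n}\asymp n^{k_n-1}$ forces $k_n\ge2$, and then $k_n/b_n\to0$ trivially. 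If $k_n\to\infty$, then $k_n/b_n\to0$ because $k_n=O(\log n)$ while $b_n\to\infty$ with $b_n\ge$ a power-type function of $n$; concretely, we will use the hypothesis $b_n\to\infty$ to absorb this case, checking $k_n\le C\log n\le C' k_n\log(n/b_n)$.

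The last case still needs care. Note that $k_n/b_n\to0$ fails only if $b_n=O(k_n)=O(\log n)$. In that case $\log(n/b_n)\ge\tfrac12\log n$, so $k_n=O(1)$, and then $k_n/b_n\to 0$ since $b_n\to\infty$. Thus the correction tends to zero in all cases, every factorial moment converges to $\lambda^r$, and the method of moments yields $E_n\Rightarrow\Poisson(\lambda)$, hence $\Pbb(E_n=u)\to e^{-\lambda}\lambda^u/u!$.
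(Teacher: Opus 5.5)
Your proof is correct. It follows the same factorial-moment route as the paper: the exact formula $\E\fall{E_n}{r}=\fall{n}{r}(\fall{b_n}{r}/\fall{n}{r})^{k_n}$, a log-comparison with $\lambda_n^r$, and the method of moments. The differences are only in how you control the correction term.

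Your second-order Taylor expansion leaves an extra $O(k_n/b_n^2)$ error that you must handle separately. The paper instead applies the mean value theorem to $x\mapsto\log(1-jx)$ on $[1/n,1/b_n]$ and gets each bracket as $O_r(m_n/(nb_n))$ directly, so only $\gamma_n\to0$ is needed. In any case $k_n/b_n^2\le 2\gamma_n$, since $m_nb_n\ge n/2$.

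For $\gamma_n\to0$, the paper avoids your case split. Since $k_n=1$ would give $\lambda_n=b_n\to\infty$, eventually $k_n\ge2$, hence $b_n\ge\sqrt{cn}$. Then $\gamma_n=k_n(1-q_n)/b_n\le k_n\delta_n/b_n=O(\log n/\sqrt n)$, where $\delta_n=-\log q_n$. You already had both ingredients: your bound $k_n\le(\log n+O(1))\,n/m_n$ holds for every $n$ and says exactly $\gamma_n\le(\log n+O(1))/b_n$, and you noted that $k_n=1$ is impossible. Combining them would have finished the argument in two lines.

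Your paragraph on the subcase $b_n<n/2$, $k_n\to\infty$ proves nothing as written. The ``power-type'' lower bound on $b_n$ is only asserted, and the displayed check merely restates $k_n\log(n/b_n)=\log n+O(1)$. What actually closes that case is your final contradiction argument, which is valid and should replace the middle discussion. If $k_n/b_n\ge\varepsilon$ along a subsequence, then $b_n=O(\log n)$, so $\log(n/b_n)\ge\tfrac12\log n$, so $k_n\le 2$ eventually. Then $k_n/b_n\to0$ because $b_n\to\infty$, a contradiction.
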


The proof is a factorial-moment calculation.  For fixed $r$,
\[
        \E\fall{E_n}{r}
        =\fall{n}{r}\left(\frac{\binom{n-r}{m_n}}{\binom{n}{m_n}}\right)^{k_n}
        =\fall{n}{r}\left(\frac{\fall{b_n}{r}}{\fall{n}{r}}\right)^{k_n},
\]
and the logarithm of the ratio of this expression to \(\lambda_n^r\) is 
\[O_r\!\left(\frac1n+\frac{k_nm_n}{n(n-m_n)}\right)=o(1). \]

We then combine Theorem \ref{thm:coupon-poisson-intro} with the exact nested-set formula from \cite{FengProfiles}.  Let $Q_{n,p}^{(m)}$ be the one-step law of $p$-colored top-$m$-to-random on $G_{n,p}=C_p\wr S_n$, and let $U_{n,p}$ be uniform.  For a continuous function $\Phi:[0,\infty)\to\R$ of polynomial growth, define
\[
        \mathcal I_{\Phi}(M,U)=\sum_x U(x)\Phi\!\left(\frac{M(x)}{U(x)}\right).
\]
For $\lambda>0$ and fixed $p\ge1$, put
\[
        a_{\ell,p}=\frac{1}{\ell!p^\ell}-\frac{1}{(\ell+1)!p^{\ell+1}},
        \qquad
        s_{\ell,p}(\lambda)=e^{-\lambda}\sum_{u=0}^{\ell}(p\lambda)^u.
\]
The limiting functional is
\[
        \mathcal I_{\Phi,p}(\lambda)
        :=\sum_{\ell=0}^{\infty}a_{\ell,p}\Phi(s_{\ell,p}(\lambda)).
\]
The shuffle consequence is this.

\begin{theorem}[Growing-block colored shuffle profiles]\label{thm:shuffle-intro}
Fix $p\ge1$.  Let $m_n,k_n$ satisfy the assumptions of Theorem \ref{thm:coupon-poisson-intro}, and set
\[
        M_n=(Q_{n,p}^{(m_n)})^{*k_n},\qquad U_n=U_{n,p}.
\]
Then, for every continuous $\Phi:[0,\infty)\to\R$ of polynomial growth,
\[
        \mathcal I_{\Phi}(M_n,U_n)\to \mathcal I_{\Phi,p}(\lambda).
\]
In particular, with $c=-\log\lambda$,
\[
        \normTV{M_n-U_n}\to f_p(c),
        \qquad
        f_p(c):=\sum_{\ell=0}^{\infty}a_{\ell,p}\bigl(s_{\ell,p}(e^{-c})-1\bigr)_+.
\]
Moreover,
\[
        \sepd(M_n,U_n)\to
        \begin{cases}
        1-e^{-\lambda}(1+\lambda),&p=1,\\
        1-e^{-\lambda},&p\ge2.
        \end{cases}
\]
\end{theorem}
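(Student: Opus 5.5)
The plan is to reduce everything to Theorem~\ref{thm:coupon-poisson-intro} through the exact nested-set formula of \cite{FengProfiles}, which holds for every $1\le m\le n$. As I recall it, that formula says the likelihood ratio $M_n(x)/U_n(x)$ depends on $x\in C_p\wr S_n$ only through a nested-set statistic $\ell(x)\in\{0,1,\dots,n\}$. It has the form
\[
R_n(\ell)=\sum_{u=0}^{\ell} w_{n,u}\,\Pbb(E_n=u),
\]
with explicit combinatorial weights satisfying $w_{n,u}\to p^u u!$ for each fixed $u$. It also gives the uniform law of $\ell$ explicitly, with $U_n(\ell\ge \ell_0)\to 1/(\ell_0!\,p^{\ell_0})$, hence $U_n(\ell=\ell_0)\to a_{\ell_0,p}$.

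I will first pin down these exact expressions from \cite{FengProfiles}. I will then write
\[
\mathcal I_\Phi(M_n,U_n)=\sum_{\ell=0}^{n} U_n(\ell(x)=\ell)\,\Phi(R_n(\ell)).
\]
For each fixed $\ell$, Theorem~\ref{thm:coupon-poisson-intro} gives
\[
R_n(\ell)\to \sum_{u\le\ell}p^u u!\,e^{-\lambda}\lambda^u/u! = s_{\ell,p}(\lambda).
\]
Since $\Phi$ is continuous, each summand therefore converges to $a_{\ell,p}\Phi(s_{\ell,p}(\lambda))$.

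The main obstacle is justifying the exchange of limit and sum, since $\ell$ ranges up to $n$ and $R_n(\ell)$ is not a priori bounded. I would control it with a factorial-moment bound. Because $(b_n)_r/(n)_r\le q_n^r$, we get
\[
u!\,\Pbb(E_n=u)\le \E\fall{E_n}{u}\le \fall{n}{u}\,q_n^{k_nu}\le \lambda_n^u .
\]
Provided $w_{n,u}\le C\,p^u u!$ uniformly, this gives
\[
R_n(\ell)\le C\sum_{u\le\ell}(p\lambda_n)^u\le C'\,A^{\ell+1},
\]
with $A$ independent of $n$ since $\lambda_n$ is bounded. With $|\Phi(t)|\le C(1+t^d)$ and $U_n(\ell(x)=\ell)\le C/(\ell!\,p^\ell)$, the summands are dominated by $C''A^{d(\ell+1)}/\ell!$, which is summable in $\ell$. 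Dominated convergence then gives $\mathcal I_\Phi(M_n,U_n)\to\mathcal I_{\Phi,p}(\lambda)$. If the exact weights are not monotone in this way, I would instead truncate at a large fixed $L$ and bound the tail $\ell>L$ directly using the same moment estimate.

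Total variation follows by taking $\Phi(t)=(t-1)_+$, which is continuous with linear growth; then $\normTV{M_n-U_n}=\mathcal I_\Phi(M_n,U_n)\to f_p(c)$ with $\lambda=e^{-c}$.

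For separation I will use $\sepd(M_n,U_n)=1-\min_x R_n(\ell(x))$, the minimum being over values of $\ell$ actually attained. Since $R_n(\ell)$ is nondecreasing in $\ell$ (all terms are nonnegative), the minimum is at the smallest attained $\ell$. For $p\ge2$ the class $\ell=0$ is nonempty, so the minimum is $R_n(0)\to e^{-\lambda}$. For $p=1$ the class $\ell=0$ is empty, consistent with $a_{0,1}=0$, since an uncolored permutation always has $\ell\ge1$. The minimum is then $R_n(1)\to e^{-\lambda}(1+\lambda)$. Since only fixed $\ell$ is involved, no uniformity issue arises here.
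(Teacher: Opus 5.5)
Your proposal is correct and follows essentially the paper's route. Both arguments use the exact nested-set reduction, the Poisson limit for each fixed $\ell$, and the factorial-moment envelope $u!\,p^u\,\Pbb(E_n=u)\le(p\lambda_n)^u$ to justify dominated convergence; the paper delegates that last step to \cite[Theorem~5.27]{FengProfiles}, and your monotonicity argument for separation re-derives \eqref{eq:sep-exact}. Your hedges are unnecessary: in \eqref{eq:LR-exact} the weights are exactly $u!\,p^u$, and $U_{n,p}\{L_p=\ell\}=a_{\ell,p}$ exactly for $\ell<n$.
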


Thus the fixed-\(m\) profiles persist whenever the untouched-label count has the required Poisson limit. The hypotheses of Theorem~\ref{thm:coupon-poisson-intro} are then verified in three concrete regimes. First, in the small-block regime \(m_n=o(n)\), the usual cutoff coordinate
\[
        k_n(c)=\left\lfloor\frac{\log n+c}{\delta_n}\right\rfloor,
        \qquad
        \delta_n=-\log\left(1-\frac{m_n}{n}\right),
\]
gives the profile \(f_p(c)\).  Second, in the proportional-block regime \(m_n/n\to\alpha\in(0,1)\), the cutoff window has order one; if \(\log n-k_n\delta_n\to a\), then the profile is \(f_p(-a)\). Third, for an integer \(r\ge2\) and \(a>0\), in the near-full regime
\[
        n-m_n\sim a n^{1-1/r},
\]
the nontrivial profile occurs after \(r\) shuffles and is \(f_p(-r\log a)\). This three-regime organization parallels the package-size regimes considered by Berend and Sher~\cite{BerendSher}.

Equivalently, the untouched set after \(k_n\) steps is the random intersection
\[
        B_1\cap\cdots\cap B_{k_n}
\]
of independent uniform \(b_n\)-subsets of \([n]\).  Thus the probabilistic content of the paper is a Poisson limit for the size of this random intersection, while the shuffle-specific content enters through the exact nested-set likelihood-ratio reduction.

The paper is organized as follows. Section \ref{sec:reduction} recalls the exact nested-set reduction for colored top-$m$-to-random shuffles. Section \ref{sec:coupon} proves the variable-block Poisson limit. Section \ref{sec:profiles} proves the shuffle profile theorem. Section \ref{sec:regimes} gives the main regimes: $m_n=o(n)$, proportional blocks, and near-full blocks.

\section{The exact shuffle reduction}\label{sec:reduction}

Fix integers $n,p\ge1$.  Let
\[
        G_{n,p}=C_p\wr S_n\cong C_p^n\rtimes S_n
\]
be the group of $p$-colored permutations. We write $U_{n,p}$ for the uniform probability measure on $G_{n,p}$.

\begin{definition}[Colored top-$m$-to-random]\label{def:shuffle}
For $1\le m\le n$, one step of $p$-colored top-$m$-to-random is the following shuffle. Remove the top $m$ cards, recolor those $m$ cards by independent uniform elements of $C_p$, apply a uniform random permutation to the removed block, choose an $m$-element set of final positions uniformly among all $\binom nm$ possibilities, and insert the removed cards into those positions. The remaining $n-m$ cards preserve their relative order and color.  Let $Q_{n,p}^{(m)}$ denote the one-step law on $G_{n,p}$.
\end{definition}

For $0\le u\le n$, let $A_u\subseteq G_{n,p}$ be the set of colored permutations in which the largest $u$ labels occur in increasing order and have color $0$. Thus
\[
        A_0=G_{n,p},\qquad A_0\supseteq A_1\supseteq\cdots\supseteq A_n.
\]
Define
\[
        L_p(x)=\max\{u\in\{0,\ldots,n\}:x\in A_u\}.
\]
Under the uniform law $U_{n,p}$,
\begin{equation}\label{eq:Lp-dist}
        U_{n,p}\{L_p=\ell\}
        =\frac{1}{\ell!p^\ell}-\frac{1}{(\ell+1)!p^{\ell+1}}\qquad(0\le \ell<n),
\end{equation}
while
\[
        U_{n,p}\{L_p=n\}=\frac{1}{n!p^n}.
\]
Indeed, $U_{n,p}(A_u)=1/(u!p^u)$.

Now fix $n,m,k$. Let $S_1,\ldots,S_k$ be independent uniform $m$-subsets of $[n]$, and let
\[
        T_k=S_1\cup\cdots\cup S_k,
        \qquad
        E_k^{(m)}=n-|T_k|.
\]
Equivalently, if $B_i=[n]\setminus S_i$, then $B_1,
\ldots,B_k$ are independent uniform $(n-m)$-subsets and
\[
        E_k^{(m)}=|B_1\cap\cdots\cap B_k|.
\]
Let
\[
        P_k^{(m)}(u)=\Pbb(E_k^{(m)}=u),\qquad 0\le u\le n.
\]

We use the following exact reduction from \cite{FengProfiles}.

\begin{proposition}[Exact nested-set reduction {\cite{FengProfiles}}]\label{thm:exact-reduction}
For every $n,p\ge1$, $1\le m\le n$, $k\ge0$, and $x\in G_{n,p}$,
\begin{equation}\label{eq:LR-exact}
        \frac{(Q_{n,p}^{(m)})^{*k}(x)}{U_{n,p}(x)}
        =\sum_{u=0}^{L_p(x)}P_k^{(m)}(u)u!p^u.
\end{equation}
Consequently, for every continuous $\Phi:[0,\infty)\to\R$,
\begin{equation}\label{eq:functional-exact}
        \sum_{x\in G_{n,p}}U_{n,p}(x)\Phi\!\left(\frac{(Q_{n,p}^{(m)})^{*k}(x)}{U_{n,p}(x)}\right)
        =\sum_{\ell=0}^{n}U_{n,p}\{L_p=\ell\}\Phi\!\left(\sum_{u=0}^{\ell}P_k^{(m)}(u)u!p^u\right).
\end{equation}
Moreover,
\begin{equation}\label{eq:sep-exact}
        \sepd\!\left((Q_{n,p}^{(m)})^{*k},U_{n,p}\right)
        =
        \begin{cases}
        1-P_k^{(m)}(0)-P_k^{(m)}(1),&p=1,\\
        1-P_k^{(m)}(0),&p\ge2.
        \end{cases}
\end{equation}
\end{proposition}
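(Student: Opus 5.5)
The plan is to analyze the \emph{inverse} walk and then transport the result back by inversion. Inversion $x\mapsto x^{-1}$ preserves $U_{n,p}$, so every functional $\sum_x U(x)\Phi(M(x)/U(x))$ and the separation distance are unchanged if $M$ is replaced by $\check M(x)=M(x^{-1})$. The inverse of one step of Definition~\ref{def:shuffle} is a ``random-$m$-to-top'' step. In it, one chooses a uniform $m$-set of positions, removes those cards, recolors them uniformly, permutes them uniformly, and places them on top; the other cards keep their order and color.

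Conditional on the past, the chosen positions form a uniform $m$-subset of the current positions. Hence the set of cards moved at each step is a uniform $m$-subset of labels, independent of the past. So the set $T_k$ of cards touched at least once is a union of $k$ independent uniform $m$-subsets of $[n]$, and the number of untouched cards has law $P_k^{(m)}$.

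\textbf{Conditional uniformity.} I will prove by induction on $k$ a joint statement about the configuration after $k$ inverse steps. Given $T_k=T$:
\begin{itemize}
\item the untouched cards occupy the bottom $n-|T|$ positions, in their original relative order and with color $0$;
\item the cards of $T$ occupy the top $|T|$ positions in a uniformly random order with i.i.d.\ uniform colors.
\end{itemize}
The inductive step checks that after one more step, given the new touched set, the new top block is again uniform. The freshly moved cards arrive on top in uniform order with fresh uniform colors. The previously touched but not-chosen cards sit below them, in an order and coloring that are uniform by hypothesis and independent of which cards were chosen. This independence is the step I expect to be the main obstacle: one has to verify that, jointly with the choice, the resulting arrangement on the top $|T_{k+1}|$ positions is exactly uniform.

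\textbf{Mixture over $u$.} Averaging over the identity of the untouched set, which is uniform given its size $u$ by label-exchangeability, the conditional law given $E_k=u$ is uniform on a set $A'_u$. This set consists of the configurations whose bottom $u$ cards are in increasing label order with color $0$; for the identity start, ``original order'' is increasing. Since $|A'_u|=|G_{n,p}|/(u!p^u)$, that uniform law has density $u!p^u\mathbf 1_{A'_u}$ with respect to $U_{n,p}$. The sets $A'_u$ are nested, so
\[
\frac{\check M(x)}{U(x)}=\sum_{u=0}^{L'(x)}P_k^{(m)}(u)\,u!p^u ,
\]
where $L'(x)$ is the largest such $u$. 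Under inversion, ``the cards in the bottom $u$ positions are increasing with color $0$'' corresponds to a condition on the inverse permutation. After the standard relabeling of positions and labels used to define $A_u$, this gives membership in $A_u$, which yields \eqref{eq:LR-exact}.

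\textbf{Consequences.} Formula \eqref{eq:functional-exact} follows by grouping $x$ according to $L_p(x)$, using the distribution \eqref{eq:Lp-dist}.

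For separation, the ratio in \eqref{eq:LR-exact} is nondecreasing in $L_p(x)$, so $\sepd=1-\min_x(\text{ratio})$, with the minimum attained at the smallest value of $L_p$:
\begin{itemize}
\item If $p\ge2$, some $x$ has its largest label with nonzero color, so $L_p(x)=0$ is attained and the minimum is $P_k^{(m)}(0)$.
\item If $p=1$, a single label is always in increasing order with color $0$, so $L_1\ge1$ and the minimum is $P_k^{(m)}(0)+P_k^{(m)}(1)$.
\end{itemize}
This gives \eqref{eq:sep-exact}.
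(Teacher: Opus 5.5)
\textbf{Where you stand relative to the paper.} You take a different route from the paper, which proves this proposition only by citing Corollary~4.5, Corollary~5.20 and Theorem~5.7 of \cite{FengProfiles}. Rebuilding it from the inverse random-$m$-to-top walk is a reasonable self-contained alternative, and most of your steps are correct:
\begin{itemize}
\item the mixture-over-$u$ computation and the count $|A'_u|=|G_{n,p}|/(u!p^u)$;
\item the inversion transfer (reading $x$ as a position-to-label map, $x^{-1}\in A'_u$ says exactly that labels $n-u+1,\ldots,n$ sit in increasing positions with color $0$ in $x$, so no extra relabeling is needed);
\item the derivations of \eqref{eq:functional-exact} and \eqref{eq:sep-exact}. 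For $p=1$, add that $L_1=1$ is actually attained when $n\ge2$.
\end{itemize}

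\textbf{The gap.} The conditional-uniformity induction carries all the content, and you have not established it. The reason you sketch cannot establish it on its own. Given $T_k=T$ and $S_{k+1}=S$, your description puts $S$ in exactly the top $m$ slots and $T\setminus S$ below. So that conditional law is never uniform on arrangements of $T\cup S$ once $|T\cup S|>m$. The independence of the lower block from the choice is true, but it is not what is missing.

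\textbf{The missing fact.} You need that, conditional on $T_{k+1}=T'$, the last-moved set $S_{k+1}$ is a uniform $m$-subset of $T'$. Let $X_k$ be the configuration after $k$ inverse steps. Then $S_{k+1}$ is independent of $(T_k,X_k)$, and $\Pbb(T_k=T)$ depends only on $|T|$ by label exchangeability. Hence
\[
\Pbb(S_{k+1}=S,\ T_{k+1}=T')=\frac{1}{\binom{n}{m}}\sum_{T:\,T'\setminus S\subseteq T\subseteq T'}\Pbb(T_k=T)
\]
depends on $S\subseteq T'$ only through $|S|=m$.

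\textbf{Closing the induction.} Given $(T_{k+1},S_{k+1})=(T',S)$, every admissible $T_k$ satisfies $T_k\setminus S=T'\setminus S$. So slots $m+1,\ldots,|T'|$ hold a uniform arrangement of $T'\setminus S$ with i.i.d.\ uniform colors, independent of the fresh top block. Mixing over a uniform $S\subseteq T'$ then gives the uniform arrangement of $T'$ with i.i.d.\ uniform colors, which closes the induction.

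\textbf{Alternative.} You can bypass the induction with a symmetry argument. The touched part of the deck is a deterministic last-touch layering of the data $(S_i,\text{block orders},\text{colors})_{i\le k}$, and the law of that data is invariant under relabeling by any $\tau\in S_n$. Permutations of $T$ preserve the event $\{T_k=T\}$ and act simply transitively on arrangements of $T$ in the top $|T|$ slots, which gives uniformity directly. With either version of this step supplied, your argument proves the proposition.
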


\begin{proof}
The likelihood-ratio identity is Corollary~4.5 of \cite{FengProfiles}; the functional identity is Corollary~5.20; and the separation formula is Theorem~5.7, applied with the present \(m\) and \(k\).
\end{proof}

Crucially, \eqref{eq:LR-exact} holds for every \(m\), including \(m=m_n\).

\section{A Poisson theorem for variable-size group drawings}\label{sec:coupon}

In this section \(m=m_n\) and \(k=k_n\) may vary with \(n\).  We always assume
\[
        1\le m_n\le n-1,
        \qquad
        b_n=n-m_n,
        \qquad
        q_n=\frac{b_n}{n},
        \qquad
        \lambda_n=nq_n^{k_n}.
\]
Let \(E_n=E_{k_n}^{(m_n)}\) be the number of untouched labels after \(k_n\) uniform \(m_n\)-subset drawings.

We use the falling factorial notation
\[
        \fall{x}{r}=x(x-1)\cdots(x-r+1),\qquad \fall{x}{0}=1.
\]

\begin{lemma}[Factorial moments]\label{lem:factorial-moment}
Assume \(k_n\ge1\). For every integer \(r\) with \(0\le r\le n\),
\begin{equation}\label{eq:factorial-moment}
        \E\fall{E_n}{r}
        =\fall{n}{r}\left(\frac{\binom{n-r}{m_n}}{\binom{n}{m_n}}\right)^{k_n}
        =\fall{n}{r}\left(\frac{\fall{b_n}{r}}{\fall{n}{r}}\right)^{k_n},
\end{equation}
where the expression is interpreted as $0$ if $r>b_n$.
\end{lemma}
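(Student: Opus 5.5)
We prove the identity by writing the falling factorial as a count of ordered tuples of distinct untouched labels, and then using linearity of expectation together with independence of the draws. Write $U=B_1\cap\cdots\cap B_{k_n}$ for the untouched set, so that $E_n=|U|$. For any finite set $U$, the quantity $\fall{|U|}{r}$ counts the ordered $r$-tuples $(i_1,\ldots,i_r)$ of pairwise distinct elements of $U$. Hence
\[
        \fall{E_n}{r}=\sum_{(i_1,\ldots,i_r)}\mathbf 1\{i_1,\ldots,i_r\in U\},
\]
where the sum runs over all $\fall{n}{r}$ ordered tuples of distinct labels in $[n]$. When $r=0$ the sum has a single empty tuple, so both sides equal $1$.

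Taking expectations, it suffices to compute $\Pbb(\{i_1,\ldots,i_r\}\subseteq U)$ for a fixed set $R=\{i_1,\ldots,i_r\}$ of size $r$. The event $R\subseteq U$ means $R\subseteq B_j$ for every $j$, which is the same as $S_j\cap R=\emptyset$ for every $j$. The draws are independent, so this probability is the $k_n$-th power of the single-draw probability. For one draw, $\Pbb(S_1\cap R=\emptyset)=\binom{n-r}{m_n}\big/\binom{n}{m_n}$, since $S_1$ must be an $m_n$-subset of the $n-r$ labels outside $R$. This quantity is the same for every $R$, so multiplying by the $\fall{n}{r}$ tuples gives the first equality in \eqref{eq:factorial-moment}. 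We use $k_n\ge1$ only so that $U$ is a genuine intersection; the convention $E_n=n$ for $k_n=0$ would also match.

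The second equality is the identity
\[
        \frac{\binom{n-r}{m}}{\binom{n}{m}}=\frac{(n-r)!\,(n-m)!}{n!\,(n-r-m)!}=\frac{\fall{b}{r}}{\fall{n}{r}},\qquad b=n-m,
\]
valid when $r\le b$. Alternatively, one can count directly: $R\subseteq B_1$ for a uniform $b$-subset $B_1$ has probability $\binom{n-r}{b-r}\big/\binom nb=\fall{b}{r}/\fall{n}{r}$.

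If $r>b_n$, then $\binom{n-r}{m_n}=0$ and $\fall{b_n}{r}=0$, and in that case $E_n\le b_n<r$ forces $\fall{E_n}{r}=0$. So all three expressions vanish, which is consistent with the stated interpretation.

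None of these steps is a real obstacle. The only point that needs care is bookkeeping of the boundary cases, namely $r=0$ and $r>b_n$, where binomial coefficients with $n-r<m_n$ vanish.
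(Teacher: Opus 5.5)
Your proof is correct. The paper does not argue the first equality: it imports it as Lemma~4.7 of \cite{FengProfiles} (with $m=m_n$, $k=k_n$, $u=r$), and it checks the second equality by the same binomial identity you write down.

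Your indicator-sum argument replaces that citation. You note that $\fall{E_n}{r}$ counts ordered $r$-tuples of distinct untouched labels, and that $\Pbb(R\subseteq B_1\cap\cdots\cap B_{k_n})=(\fall{b_n}{r}/\fall{n}{r})^{k_n}$ by independence. This makes the lemma self-contained and shows it is a pure statement about the random intersection, with nothing from the shuffle structure. The citation buys brevity and keeps the notation aligned with the earlier fixed-$m$ paper.

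One small correction to your side remark. $k_n\ge1$ is used for more than making $U$ a genuine intersection: it is exactly what makes $(\fall{b_n}{r}/\fall{n}{r})^{k_n}=0$, and $E_n\le b_n$, when $r>b_n$. Under the convention $E_n=n$ for $k_n=0$ one has $\E\fall{E_n}{r}=\fall{n}{r}\neq0$ for $b_n<r\le n$. So the formula matches in that case only with $0^0=1$, and the stated ``interpreted as $0$'' clause would fail.
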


\begin{proof}
This is Lemma~4.7 of \cite{FengProfiles} applied with \(m=m_n\), \(k=k_n\), and \(u=r\).  The second equality follows from
\[
        \frac{\binom{n-r}{m_n}}{\binom{n}{m_n}}
        =
        \frac{\fall{b_n}{r}}{\fall{n}{r}},
        \qquad b_n=n-m_n. \qedhere
\]
\end{proof}

\begin{lemma}[Critical-regime form of the dependence condition]\label{lem:critical-equivalence}
Assume
\[
        \lambda_n\to\lambda\in(0,\infty).
\]
Then
\[
        \frac{k_nm_n}{nb_n}\to0
        \qquad\Longleftrightarrow\qquad
        b_n\to\infty.
\]
\end{lemma}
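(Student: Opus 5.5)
The plan is to rewrite the dependence quantity \(\gamma_n=k_nm_n/(nb_n)\) in terms of \(\lambda_n\) and \(b_n\), and then read off the equivalence. Write \(x_n=m_n/n\in(0,1)\), so \(q_n=1-x_n\) and \(\delta_n=-\log q_n>0\). Taking logarithms in \(\lambda_n=nq_n^{k_n}\) gives
\[
k_n\delta_n=\log n-\log\lambda_n=\log n+O(1),
\]
since \(\lambda_n\to\lambda\in(0,\infty)\). Hence
\[
\gamma_n=\frac{k_n x_n}{b_n}=\frac{\log n-\log\lambda_n}{b_n}\cdot\frac{x_n}{\delta_n}.
\]
The elementary bounds \(x\le-\log(1-x)\le x/(1-x)\) on \((0,1)\) give \(1-x_n\le x_n/\delta_n\le1\), that is,
\[
q_n\,\frac{\log n+O(1)}{b_n}\le\gamma_n\le\frac{\log n+O(1)}{b_n}.
\]
Since \(q_n b_n^{-1}\log n=b_n^{-1}\cdot (b_n/n)\log n=\log n/n\), the lower bound is simply \((\log n+O(1))/n\). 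Thus the statement reduces to comparing \(b_n\) with \(\log n\), and \(\gamma_n\) alone is not precise enough. We therefore use the constraint coming from \(\lambda_n\) more carefully.

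For the direction \(b_n\to\infty\Rightarrow\gamma_n\to0\), fix \(n\) with \(k_n\ge1\). (If \(k_n=0\) then \(\lambda_n=n\to\infty\), which is excluded.) We split into cases according to the size of \(m_n\). If \(x_n\le1/2\), then \(x_n/\delta_n\le1\) and \(k_n\le(\log n+O(1))/\delta_n\le 2(\log n+O(1))/x_n\), so \(\gamma_n=k_nx_n/b_n\le 2(\log n+O(1))/b_n\le 4(\log n+O(1))/n\to0\), because \(b_n\ge n/2\). If \(x_n>1/2\), we use \(\delta_n=\log(n/b_n)\), so that \(k_n=(\log n+O(1))/\log(n/b_n)\) and
\[
\gamma_n\le\frac{k_n}{b_n}=\frac{\log n+O(1)}{b_n\log(n/b_n)}.
\]
Set \(t=n/b_n\ge2\), so that \(b_n\log(n/b_n)=n\log t/t\). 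Two sub-cases then cover everything. If \(b_n\ge\sqrt n\), then \(\log t\ge\log 2\) gives \(b_n\log t\ge\sqrt n\log 2\), and the bound is \(O(\log n/\sqrt n)\to0\). If \(b_n<\sqrt n\), then \(\log(n/b_n)\ge\tfrac12\log n\), and the bound is at most \(2(1+o(1))/b_n\to0\) because \(b_n\to\infty\).

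For the converse, suppose \(b_n\not\to\infty\), and pass to a subsequence with \(b_n\le B\). Then \(x_n\ge1-B/n\to1\) and \(\delta_n=\log(n/b_n)\le\log n\). It follows that \(k_n\ge(\log n+O(1))/\log n\to1\), and in particular \(k_n\ge1\) eventually. Hence \(\gamma_n\ge k_nx_n/B\ge(1+o(1))/(2B)\), which does not tend to \(0\). This proves the equivalence.

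The main obstacle is the near-full case \(x_n>1/2\) in the forward direction. There \(k_n\) can be as small as a constant while \(b_n\) grows arbitrarily slowly, and the bound must extract a full \(\log n\) factor from \(\delta_n=\log(n/b_n)\). The split at \(b_n=\sqrt n\) is designed to handle exactly this.
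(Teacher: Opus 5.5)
Your proof is correct, but for the direction $b_n\to\infty\Rightarrow\gamma_n\to0$ it takes a longer route than the paper. For the other direction you use the same idea as the paper: $k_n\ge1$, hence $\gamma_n\ge x_n/b_n$. The paper writes this directly as $1/b_n\le\gamma_n+1/n$, and gets $k_n\ge1$ from the fact that $k_n=0$ would give $\lambda_n=n$, rather than from $\delta_n\le\log n$.

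For $b_n\to\infty\Rightarrow\gamma_n\to0$, the paper avoids any case split by using integrality of $k_n$. Since $k_n=1$ would give $\lambda_n=b_n\to\infty$, we have $k_n\ge2$ eventually. Then $\lambda_n=nq_n^{k_n}\le nq_n^2=b_n^2/n$, so $b_n\ge\sqrt{cn}$, where $c>0$ is a lower bound for $\lambda_n$. With that, the crude bound $\gamma_n\le k_n\delta_n/b_n=(\log n+O(1))/b_n$ gives $O(\log n/\sqrt n)$. This is exactly the upper bound from your first paragraph.

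This also shows your closing remark about the ``main obstacle'' is inaccurate. Under the hypotheses, $b_n$ cannot grow arbitrarily slowly: for fixed $k_n=r\ge2$ one has $b_n\asymp n^{1-1/r}$. So your sub-case $b_n<\sqrt n$ only contains $b_n$ of order $\sqrt n$. Extracting $\tfrac12\log n$ from $\delta_n=\log(n/b_n)$ there is harmless but guards against a regime that does not occur.

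What your three-way split buys is that it never has to rule out $k_n=1$. That direction therefore effectively needs only a lower bound on $\lambda_n$. What the paper's argument buys is brevity, the structural fact $b_n\gtrsim\sqrt n$, and a rate $\gamma_n=O(\log n/\sqrt n)$ that does not depend on how fast $b_n\to\infty$.
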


\begin{proof}
First assume
\[
        \frac{k_nm_n}{nb_n}\to0.
\]
Since \(k_n=0\) would give \(\lambda_n=n\to\infty\), the convergence \(\lambda_n\to\lambda<\infty\) implies \(k_n\ge1\) for all sufficiently large \(n\). Hence
\[
        \frac{k_nm_n}{nb_n}
        \ge
        \frac{m_n}{nb_n}
        =
        \frac1{b_n}-\frac1n.
\]
Thus
\[
        0\le \frac1{b_n}\le \frac{k_nm_n}{nb_n}+\frac1n\to0,
\]
so \(b_n\to\infty\).

Conversely, assume \(b_n\to\infty\). Choose \(c>0\) such that \(\lambda_n\ge c\) for all sufficiently large \(n\).  Since \(k_n=0\) would give \(\lambda_n=n\to\infty\), and \(k_n=1\) would give \(\lambda_n=b_n\to\infty\), we have \(k_n\ge2\) for all sufficiently large \(n\). From
\[
        nq_n^{k_n}=\lambda_n\ge c
\]
we get, for all sufficiently large \(n\),
\[
        q_n\ge \left(\frac cn\right)^{1/k_n}
        \ge
        \left(\frac cn\right)^{1/2},
\]
and therefore
\[
        b_n=nq_n\ge \sqrt{cn}.
\]
Now put
\[
        \delta_n=-\log q_n.
\]
Since \(\lambda_n=nq_n^{k_n}\),
\[
        k_n\delta_n
        =
        \log n-\log\lambda_n
        =
        O(\log n).
\]
Using \(1-q_n\le -\log q_n=\delta_n\), we obtain
\[
        \frac{k_nm_n}{nb_n}
        =
        \frac{k_n(1-q_n)}{b_n}
        \le
        \frac{k_n\delta_n}{b_n}
        =
        O\!\left(\frac{\log n}{\sqrt n}\right)
        \to0.
\]
This proves the equivalence.
\end{proof}

\begin{theorem}[Variable-block Poisson limit]\label{thm:coupon-poisson}
Assume
\begin{equation}\label{eq:main-assumptions}
        \lambda_n=nq_n^{k_n}\to\lambda\in(0,\infty),
        \qquad
        b_n=n-m_n\to\infty.
\end{equation}
Then
\[
        E_n\Rightarrow \Poisson(\lambda).
\]
Equivalently, for each fixed integer $u\ge0$,
\[
        \Pbb(E_n=u)\to e^{-\lambda}\frac{\lambda^u}{u!}.
\]
\end{theorem}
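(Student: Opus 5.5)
We use the method of factorial moments: since the Poisson law is determined by its moments, it suffices to show that for every fixed integer $r\ge0$,
\[
        \E\fall{E_n}{r}\longrightarrow \lambda^r .
\]
Fix $r$. Since $\lambda_n\to\lambda<\infty$, we have $k_n\ge1$ eventually, so Lemma~\ref{lem:factorial-moment} applies. Since $b_n\to\infty$, also $r\le b_n$ eventually, so the formula is not degenerate. We compare $\E\fall{E_n}{r}$ with $\lambda_n^r=n^r q_n^{rk_n}$ by writing
\[
        \frac{\E\fall{E_n}{r}}{\lambda_n^r}
        =\frac{\fall{n}{r}}{n^r}\prod_{j=0}^{r-1}\left(\frac{(b_n-j)/b_n}{(n-j)/n}\right)^{k_n}
        =\frac{\fall{n}{r}}{n^r}\prod_{j=1}^{r-1}\left(\frac{1-j/b_n}{1-j/n}\right)^{k_n}.
\]
The prefactor $\fall{n}{r}/n^r=1+O_r(1/n)$, so the work is in the product.

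For each fixed $1\le j\le r-1$, take logarithms:
\[
        k_n\Bigl[\log(1-j/b_n)-\log(1-j/n)\Bigr].
\]
Since $b_n\to\infty$, both $j/b_n$ and $j/n$ are small. Expanding with $\log(1-x)=-x+O(x^2)$ gives
\[
        k_n\left(-\frac{j}{b_n}+\frac{j}{n}\right)+O_r\!\left(\frac{k_n}{b_n^2}\right)
        =-j\,\frac{k_nm_n}{nb_n}+O_r\!\left(\frac{k_n}{b_n^2}\right).
\]
The first term tends to $0$ by Lemma~\ref{lem:critical-equivalence}, since we assume $b_n\to\infty$ and $\lambda_n\to\lambda$.

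The main obstacle is the second-order term $k_n/b_n^2$: the dependence measure $\gamma_n$ need not control it directly when $m_n$ is small. We use the estimates from the proof of Lemma~\ref{lem:critical-equivalence}. There, $b_n\ge\sqrt{cn}$, and $k_n\delta_n=O(\log n)$ with $\delta_n=-\log q_n$. We split into two cases.
\begin{itemize}
\item If $q_n\le 1/2$, then $\delta_n\ge\log 2$, so $k_n=O(\log n)$. Hence $k_n/b_n^2=O(\log n/n)\to0$.
\item If $q_n>1/2$, then $b_n>n/2$. Also $k_n\le k_n\delta_n/(1-q_n)=O(n\log n/m_n)$, using $\delta_n\ge 1-q_n=m_n/n$ and $m_n\ge1$. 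So $k_n/b_n^2=O(n\log n/n^2)\to0$.
\end{itemize}
Alternatively, one can avoid the expansion with a sharper inequality. Since $b_n\le n$, the ratio $(1-j/b_n)/(1-j/n)$ is at most $1$. The difference of the logarithms is of order $j(1/b_n-1/n)$ up to a factor $1+O(j/b_n)$, so the $O(k_n/b_n^2)$ remainder can be replaced by $O(j\,k_nm_n/(nb_n)\cdot 1/b_n)$. This is again $o(1)$ by Lemma~\ref{lem:critical-equivalence}. I would present this cleaner bound, using $|\log(1-x)-\log(1-y)|\le (x-y)/(1-x)$ for $0\le y\le x<1$.

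Summing over the finitely many $j<r$ shows that the log-ratio is $O_r(1/n+\gamma_n)=o(1)$. Hence $\E\fall{E_n}{r}\to\lambda^r$ for every fixed $r$. Since $\Poisson(\lambda)$ has factorial moments $\lambda^r$ and is determined by its moments, the method of moments gives $E_n\Rightarrow\Poisson(\lambda)$. As $E_n$ is integer-valued, this is equivalent to pointwise convergence of $\Pbb(E_n=u)$. One could instead use the Bonferroni inclusion–exclusion bounds $\Pbb(E_n=u)=\sum_{r\ge u}(-1)^{r-u}\binom{r}{u}\E\fall{E_n}{r}/r!$, truncated at even and odd orders.
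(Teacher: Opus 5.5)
Your proposal is correct and follows the paper's proof essentially verbatim. Both arguments use factorial moments from Lemma~\ref{lem:factorial-moment}, compare with $\lambda_n^r$ on the log scale, bound each difference $\log(1-j/b_n)-\log(1-j/n)$ by a mean-value estimate of order $m_n/(nb_n)$ controlled via Lemma~\ref{lem:critical-equivalence}, and finish with the method of moments. Your case split for the $k_n/b_n^2$ remainder is harmless but unnecessary, and the ``obstacle'' you describe is not real: since $m_nb_n\ge n-1$, one has $k_n/b_n^2\le \frac{n}{n-1}\gamma_n$, so $\gamma_n$ already controls that term. The cleaner bound you say you would present is exactly the paper's argument.
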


\begin{proof}
By Lemma~\ref{lem:critical-equivalence},
\[
        \frac{k_nm_n}{nb_n}\to0.
\]
We prove convergence of factorial moments. Since \(\lambda_n\to\lambda<\infty\), we have \(k_n\ge1\) for all sufficiently large \(n\).

Fix $r\ge1$. By Lemma~\ref{lem:factorial-moment}, for all sufficiently large $n$,
\[
        \E\fall{E_n}{r}
        =
        \fall{n}{r}
        \left(\frac{\fall{b_n}{r}}{\fall{n}{r}}\right)^{k_n}.
\]
Compare this to
\[
        \lambda_n^r
        =
        n^r\left(\frac{b_n}{n}\right)^{k_nr}.
\]
Taking logarithms,
\begin{align*}
        \log\frac{\E\fall{E_n}{r}}{\lambda_n^r}
        &=
        \log\frac{\fall{n}{r}}{n^r}
        +
        k_n\sum_{j=0}^{r-1}
        \left[
        \log\left(1-\frac{j}{b_n}\right)
        -
        \log\left(1-\frac{j}{n}\right)
        \right].
\end{align*}
Since \(r\) is fixed,
\[
        \log\frac{\fall{n}{r}}{n^r}
        =
        O_r\!\left(\frac1n\right).
\]
Moreover, since \(b_n\to\infty\), the mean value theorem applied to \(x\mapsto\log(1-jx)\) gives, uniformly for \(0\le j\le r-1\),
\[
        \log\left(1-\frac{j}{b_n}\right)
        -
        \log\left(1-\frac{j}{n}\right)
        =
        O_r\!\left(\frac1{b_n}-\frac1n\right)
        =
        O_r\!\left(\frac{m_n}{nb_n}\right).
\]
Therefore
\[
        \log\frac{\E\fall{E_n}{r}}{\lambda_n^r}
        =
        O_r\!\left(\frac1n+\frac{k_nm_n}{nb_n}\right)
        =
        o(1).
\]
Thus
\[
        \E\fall{E_n}{r}\to\lambda^r
        \qquad(r\ge0).
\]
Ordinary moments are finite linear combinations of factorial moments, so all ordinary moments converge to the moments of \(\Poisson(\lambda)\). Since the Poisson law is moment-determinate,
\[
        E_n\Rightarrow \Poisson(\lambda).
\]

Finally, because each \(E_n\) is integer-valued, for every fixed integer \(u\ge0\),
\[
        \Pbb(E_n=u)
        =
        \Pbb\!\left(u-\frac12<E_n<u+\frac12\right).
\]
The boundary of this interval has zero \(\Poisson(\lambda)\)-mass, so weak convergence gives
\[
        \Pbb(E_n=u)\to e^{-\lambda}\frac{\lambda^u}{u!}. \qedhere
\]
\end{proof}

\begin{remark}[A non-Poisson boundary case]\label{rem:condition-needed}
The condition \(b_n\to\infty\) in Theorem~\ref{thm:coupon-poisson} cannot be removed. If \(m_n=n-b\) with fixed \(b\ge1\) and \(k_n=1\), then \(\lambda_n=b\), but \(E_n=b\) deterministically, not \(\Poisson(b)\).  Here \(b_n=b\) does not tend to infinity, and equivalently
\[
        \frac{k_nm_n}{n(n-m_n)}
        =
        \frac{n-b}{nb}
        \to
        \frac1b.
\]
\end{remark}

We also need the following uniform bound for the likelihood-ratio functional limit.

\begin{lemma}[A uniform exponential envelope]\label{lem:envelope}
Fix \(p\ge1\), and assume $\lambda_n=nq_n^{k_n}$ is bounded. Then, for all sufficiently large $n$ and every integer $u\ge0$,
\begin{equation}\label{eq:envelope}
        \Pbb(E_n=u)u!p^u\le (p\lambda_n)^u.
\end{equation}
Consequently, if $\lambda_n\to\lambda$, there is a constant $B\ge1$ such that, for all sufficiently large \(n\) and all integers \(u\ge0\),
\[
        \Pbb(E_n=u)u!p^u\le B^u.
\]
\end{lemma}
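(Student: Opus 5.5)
The bound $\Pbb(E_n=u)\le \E\fall{E_n}{u}/u!$ follows from $\fall{E_n}{u}\ge0$ and $\fall{E_n}{u}\ge u!$ on the event $\{E_n=u\}$. So
\[
\Pbb(E_n=u)\,u!\,p^u\le p^u\,\E\fall{E_n}{u},
\]
and it suffices to show $\E\fall{E_n}{u}\le\lambda_n^u$ for every integer $u\ge0$, once $n$ is large enough that $k_n\ge1$. The required $k_n\ge1$ holds for large $n$ because $k_n=0$ forces $\lambda_n=n$, contradicting boundedness. The case $u=0$ is trivial. For $u>n$ or $u>b_n$ the factorial moment vanishes: by Lemma~\ref{lem:factorial-moment} and the convention there, or simply because $E_n\le b_n$ when $k_n\ge1$. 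So the bound is only nontrivial for $1\le u\le b_n$.

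For $1\le u\le b_n$, Lemma~\ref{lem:factorial-moment} gives
\[
\E\fall{E_n}{u}=\fall{n}{u}\left(\frac{\fall{b_n}{u}}{\fall{n}{u}}\right)^{k_n}.
\]
Two termwise inequalities finish this case. First, $\fall{n}{u}\le n^u$. Second,
\[
\frac{\fall{b_n}{u}}{\fall{n}{u}}=\prod_{j=0}^{u-1}\frac{b_n-j}{n-j}\le\left(\frac{b_n}{n}\right)^u=q_n^u,
\]
since $(b_n-j)/(n-j)\le b_n/n$ whenever $b_n\le n$ and $0\le j<b_n$. Raising the second inequality to the power $k_n\ge1$ and multiplying by the first gives $\E\fall{E_n}{u}\le n^u q_n^{uk_n}=\lambda_n^u$, which is \eqref{eq:envelope}.

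For the consequence, assume $\lambda_n\to\lambda$. Then $\lambda_n\le\lambda+1$ for large $n$. Take $B=\max\{1,p(\lambda+1)\}$; then $(p\lambda_n)^u\le B^u$ for all $u\ge0$.

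There is no real obstacle here. The only points that need care are the edge cases: $k_n=0$ is excluded for large $n$ by boundedness of $\lambda_n$, and $u>b_n$ is handled by the vanishing of the factorial moment. The monotonicity $(b_n-j)/(n-j)\le b_n/n$ is immediate after cross-multiplying, since it reduces to $-jn\le -jb_n$.
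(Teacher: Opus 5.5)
Your proof is correct and follows the paper's argument essentially verbatim: after discarding $k_n=0$ and $u>b_n$, you bound $\Pbb(E_n=u)u!$ by $\E\fall{E_n}{u}$, apply Lemma~\ref{lem:factorial-moment}, and use the termwise bounds $(b_n-j)/(n-j)\le q_n$ and $\fall{n}{u}\le n^u$. Your explicit choice $B=\max\{1,p(\lambda+1)\}$ also correctly fills in the consequence, which the paper leaves implicit.
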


\begin{proof}
Since $\lambda_n$ is bounded, we have $k_n\ge1$ for all sufficiently large $n$.  We work only with such $n$.
For $u=0$ this is immediate.  If $u>b_n$, then $E_n\le b_n$ and the left-hand side is $0$. We may therefore assume $1\le u\le b_n$.  Then
\[
        \Pbb(E_n=u)u!\le \E\fall{E_n}{u}.
\]
By Lemma \ref{lem:factorial-moment}, and using
\[
        \frac{b_n-j}{n-j}\le \frac{b_n}{n}=q_n\qquad(0\le j<u),
\]
we get
\[
        \E\fall{E_n}{u}
        =\fall{n}{u}\prod_{j=0}^{u-1}\left(\frac{b_n-j}{n-j}\right)^{k_n}
        \le n^u q_n^{k_nu}=\lambda_n^u.
\]
Multiplying by $p^u$ proves \eqref{eq:envelope}.
\end{proof}

\section{Growing-block shuffle profiles}\label{sec:profiles}

Fix $p\ge1$. For $\lambda>0$, define
\begin{equation}\label{eq:aell}
        a_{\ell,p}=\frac{1}{\ell!p^\ell}-\frac{1}{(\ell+1)!p^{\ell+1}},
        \qquad \ell\ge0,
\end{equation}
with the convention that $0!=1$. Also define
\begin{equation}\label{eq:sell}
        s_{\ell,p}(\lambda)=e^{-\lambda}\sum_{u=0}^{\ell}(p\lambda)^u,
        \qquad \ell\ge0.
\end{equation}
The numbers $(a_{\ell,p})_{\ell\ge0}$ form a probability mass function, since the series telescopes.

For a probability measure $M$ on $G_{n,p}$ and a function $\Phi:[0,\infty)\to\R$, write
\[
        \mathcal I_{\Phi}(M,U_{n,p})
        =\sum_{x\in G_{n,p}}U_{n,p}(x)\Phi\!\left(\frac{M(x)}{U_{n,p}(x)}\right).
\]

\begin{theorem}[Likelihood-ratio functional limit]\label{thm:functional-limit}
Fix $p\ge1$.  Let $m_n,k_n$ satisfy \eqref{eq:main-assumptions}, and set
\[
        M_n=(Q_{n,p}^{(m_n)})^{*k_n},\qquad U_n=U_{n,p}.
\]
If $\Phi:[0,\infty)\to\R$ is continuous and has polynomial growth, meaning
\[
        |\Phi(t)|\le C(1+t^d)\qquad(t\ge0)
\]
for some \(C>0\) and \(d\ge0\), then
\begin{equation}\label{eq:functional-limit}
        \mathcal I_{\Phi}(M_n,U_n)\to
        \sum_{\ell=0}^{\infty}a_{\ell,p}\Phi(s_{\ell,p}(\lambda)).
\end{equation}
\end{theorem}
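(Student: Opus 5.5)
By the exact identity \eqref{eq:functional-exact} of Proposition~\ref{thm:exact-reduction}, applied with \(m=m_n\) and \(k=k_n\), we can write
\[
\mathcal I_{\Phi}(M_n,U_n)=\sum_{\ell=0}^{n}U_{n,p}\{L_p=\ell\}\,\Phi\bigl(R_{n,\ell}\bigr),\qquad R_{n,\ell}:=\sum_{u=0}^{\ell}\Pbb(E_n=u)\,u!\,p^u .
\]
The plan is to prove termwise convergence and then justify the interchange of limit and sum by dominated convergence over \(\ell\).

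\textbf{Termwise convergence.} Fix \(\ell\). For \(n>\ell\), \eqref{eq:Lp-dist} gives \(U_{n,p}\{L_p=\ell\}=a_{\ell,p}\), independently of \(n\). Theorem~\ref{thm:coupon-poisson} gives \(\Pbb(E_n=u)u!p^u\to e^{-\lambda}\lambda^u p^u\) for each fixed \(u\). Summing finitely many of these terms yields \(R_{n,\ell}\to s_{\ell,p}(\lambda)\). Continuity of \(\Phi\) then gives \(\Phi(R_{n,\ell})\to\Phi(s_{\ell,p}(\lambda))\).

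\textbf{Domination.} This is the main step. Lemma~\ref{lem:envelope} supplies \(B\ge1\) such that, for large \(n\) and all \(u\), we have \(\Pbb(E_n=u)u!p^u\le B^u\). Hence \(R_{n,\ell}\le (\ell+1)B^\ell\), and the growth hypothesis gives
\[
|\Phi(R_{n,\ell})|\le C\bigl(1+(\ell+1)^dB^{d\ell}\bigr).
\]

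For \(\ell<n\), the weight is \(a_{\ell,p}\le 1/(\ell!p^\ell)\). For \(\ell=n\), the weight is \(1/(n!p^n)\), which obeys the same bound. So the summand of index \(\ell\) is dominated, uniformly in \(n\), by
\[
g(\ell)=\frac{C\bigl(1+(\ell+1)^dB^{d\ell}\bigr)}{\ell!\,p^\ell}.
\]
This function is summable over \(\ell\ge0\), because \(\ell!\) beats any exponential. We treat the sum as a series over all \(\ell\ge0\) whose terms are \(0\) for \(\ell>n\). Since the summand at \(\ell=n\) is dominated by \(g(n)\to0\), the special last term is harmless. Dominated convergence then gives \eqref{eq:functional-limit}.

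The main obstacle is obtaining a bound on \(R_{n,\ell}\) that is uniform in both \(n\) and \(\ell\) and grows at most exponentially in \(\ell\). This is exactly what Lemma~\ref{lem:envelope} provides through the factorial-moment inequality \(\Pbb(E_n=u)u!\le\E\fall{E_n}{u}\le\lambda_n^u\). Without it, large \(\ell\) of order \(n\) could not be controlled, since \(u!p^u\) is enormous there. With the envelope in hand, the factorial decay of \(U_{n,p}\{L_p=\ell\}\) absorbs the polynomial growth of \(\Phi\).
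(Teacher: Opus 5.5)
Your proposal is correct and is essentially the paper's argument. It uses the same two inputs: Theorem~\ref{thm:coupon-poisson} for termwise convergence and Lemma~\ref{lem:envelope} for the uniform exponential envelope. The one difference is that you carry out the dominated-convergence interchange by hand from the exact identity \eqref{eq:functional-exact}, whereas the paper presents $M_n$ as a nested-set mixture and cites \cite[Theorem~5.27]{FengProfiles}, whose hypotheses are exactly those two inputs.
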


\begin{proof}
Set \(\mu_n(n-u):=\Pbb(E_n=u)\) for \(0\le u\le n\).  By the exact top-\(m\) occupancy mixture formula in \cite[Lemma~4.3]{FengProfiles}, \(M_n\) is the nested-set mixture associated with \(\mu_n\). By Theorem~\ref{thm:coupon-poisson},
\[
        \mu_n(n-u)\to e^{-\lambda}\frac{\lambda^u}{u!}
        \qquad(u\ge0\text{ fixed integer}),
\]
and by Lemma~\ref{lem:envelope} there is \(B\ge1\) such that
\[
        \mu_n(n-u)u!p^u\le B^u
\]
for all sufficiently large \(n\) and all integers \(0\le u\le n\). Therefore the hypotheses of \cite[Theorem~5.27]{FengProfiles} hold. Applying that theorem gives
\[
        \mathcal I_{\Phi}(M_n,U_n)\to
        \sum_{\ell=0}^{\infty}a_{\ell,p}\Phi(s_{\ell,p}(\lambda)). \qedhere
\]
\end{proof}

Several standard distances are obtained by choosing $\Phi$.

\begin{corollary}[Total variation and other integrated metrics]\label{cor:metrics}
Under the hypotheses of Theorem \ref{thm:functional-limit},
\begin{align*}
        \normTV{M_n-U_n}
        &\to
        \sum_{\ell=0}^{\infty}a_{\ell,p}\bigl(s_{\ell,p}(\lambda)-1\bigr)_+,\\
        \sum_{x\in G_{n,p}}U_n(x)\left|\frac{M_n(x)}{U_n(x)}-1\right|^q
        &\to
        \sum_{\ell=0}^{\infty}a_{\ell,p}|s_{\ell,p}(\lambda)-1|^q
        \qquad(1\le q<\infty),\\
        \chi^2(M_n,U_n)
        &\to
        \sum_{\ell=0}^{\infty}a_{\ell,p}(s_{\ell,p}(\lambda)-1)^2,\\
        D(M_n\|U_n)
        &\to
        \sum_{\ell=0}^{\infty}a_{\ell,p}s_{\ell,p}(\lambda)\log s_{\ell,p}(\lambda).
\end{align*}
Here $D$ denotes relative entropy, with the convention $0\log0=0$.
\end{corollary}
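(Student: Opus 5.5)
The plan is to apply Theorem~\ref{thm:functional-limit} four times, once for each quantity, with a suitable choice of \(\Phi\). Each \(\Phi\) must be continuous and of polynomial growth on \([0,\infty)\).

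\emph{Total variation.} Since both \(M_n\) and \(U_n\) are probability measures,
\[
\normTV{M_n-U_n}=\sum_x (M_n(x)-U_n(x))_+=\sum_x U_n(x)\Bigl(\tfrac{M_n(x)}{U_n(x)}-1\Bigr)_+ .
\]
So we take \(\Phi(t)=(t-1)_+\), which is continuous and satisfies \(|\Phi(t)|\le 1+t\). (The alternative normalization \(\tfrac12\sum_x|M_n-U_n|\) gives the same value, again because both are probability measures.)

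\emph{\(L^q\) and \(\chi^2\).} For \(L^q(U)\) with \(1\le q<\infty\), take \(\Phi(t)=|t-1|^q\). It is continuous and bounded by \(2^{q}(1+t^q)\), so it has polynomial growth of degree \(d=\lceil q\rceil\). The \(\chi^2\) statement is the case \(q=2\), using \(\chi^2(M,U)=\sum_x U(x)(M(x)/U(x)-1)^2\).

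\emph{Relative entropy.} Here \(D(M\|U)=\sum_x U(x)\,\Phi(M(x)/U(x))\) with \(\Phi(t)=t\log t\) and \(\Phi(0)=0\). With that convention \(\Phi\) is continuous on \([0,\infty)\), since \(t\log t\to0\) as \(t\downarrow0\). It also has polynomial growth, because \(|t\log t|\le 1/e\) on \([0,1]\) and \(t\log t\le t^2\) for \(t\ge1\). Hence \(|\Phi(t)|\le 1+t^2\).

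In each case, inserting the specific \(\Phi\) into the right side of \eqref{eq:functional-limit} gives exactly the stated limit. No step is a genuine obstacle. The only points needing care are continuity of \(t\log t\) at \(0\), which the stated convention handles, and the polynomial bounds above, which are all elementary.
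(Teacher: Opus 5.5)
Your proposal is correct and is essentially the paper's proof. The paper also applies Theorem~\ref{thm:functional-limit} with $\Phi(t)=(t-1)_+$, $|t-1|^q$, $(t-1)^2$, and $t\log t$, and it obtains the total variation identity from $\sum_x U(x)M(x)/U(x)=1$. Your explicit checks of continuity and polynomial growth, such as $|t\log t|\le 1+t^2$, fill in details the paper leaves implicit.
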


\begin{proof}
Take respectively
\[
        \Phi(t)=(t-1)_+,
        \qquad
        \Phi(t)=|t-1|^q,
        \qquad
        \Phi(t)=(t-1)^2,
        \qquad
        \Phi(t)=t\log t.
\]
For total variation we use
\[
        \normTV{M-U}=\sum_x U(x)\left(\frac{M(x)}{U(x)}-1\right)_+,
\]
which follows from $\sum_x U(x)M(x)/U(x)=1$.
\end{proof}

It is convenient to rewrite the total variation limit in cutoff-window notation.  For $c\in\R$, put $\lambda=e^{-c}$ and define
\begin{equation}\label{eq:fp}
        f_p(c)=\sum_{\ell=0}^{\infty}a_{\ell,p}\bigl(s_{\ell,p}(e^{-c})-1\bigr)_+.
\end{equation}
This is exactly the fixed-$m$ total variation profile from \cite{FengProfiles}.

\begin{corollary}[Separation profile]\label{cor:sep}
Under the hypotheses of Theorem \ref{thm:functional-limit},
\[
        \sepd(M_n,U_n)\to
        \begin{cases}
        1-e^{-\lambda}(1+\lambda),&p=1,\\
        1-e^{-\lambda},&p\ge2.
        \end{cases}
\]
\end{corollary}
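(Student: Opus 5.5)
The argument combines the exact separation formula \eqref{eq:sep-exact} of Proposition~\ref{thm:exact-reduction} with the Poisson limit of Theorem~\ref{thm:coupon-poisson}. Since \eqref{eq:sep-exact} holds for every $1\le m\le n$ and every $k\ge0$, we may apply it with $m=m_n$ and $k=k_n$. Note that $E_n=E_{k_n}^{(m_n)}$, so $P_{k_n}^{(m_n)}(u)=\Pbb(E_n=u)$, and we obtain for every $n$
\[
        \sepd(M_n,U_n)=
        \begin{cases}
        1-\Pbb(E_n=0)-\Pbb(E_n=1),&p=1,\\
        1-\Pbb(E_n=0),&p\ge2.
        \end{cases}
\]

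Under \eqref{eq:main-assumptions}, Theorem~\ref{thm:coupon-poisson} gives, for each fixed integer $u\ge0$,
\[
        \Pbb(E_n=u)\to e^{-\lambda}\frac{\lambda^u}{u!}.
\]
Applying this with $u=0$ and $u=1$ yields $\Pbb(E_n=0)\to e^{-\lambda}$ and $\Pbb(E_n=1)\to\lambda e^{-\lambda}$. Substituting these limits into the display above gives $1-e^{-\lambda}(1+\lambda)$ when $p=1$ and $1-e^{-\lambda}$ when $p\ge2$, which is the claim.

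There is no genuine obstacle. The only point to check is that \eqref{eq:sep-exact} is an exact identity valid for each fixed triple $(n,m,k)$, so no uniformity in $m$ is needed when $m=m_n$ varies. In particular, we do not need Theorem~\ref{thm:functional-limit} or the envelope of Lemma~\ref{lem:envelope} here, since separation is an $L^\infty$-type quantity that is not covered by the integrated functional. The finitely many small $n$ for which $k_n=0$ might occur do not affect the limit.
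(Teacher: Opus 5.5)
Your proof is correct and is the same as the paper's: apply the exact separation identity \eqref{eq:sep-exact} with $m=m_n$ and $k=k_n$, then pass to the limit using the pointwise Poisson convergence of Theorem~\ref{thm:coupon-poisson} at $u=0$ and $u=1$. Your side remark is also right: the paper's proof does not use Theorem~\ref{thm:functional-limit} or the envelope of Lemma~\ref{lem:envelope} either.
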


\begin{proof}
The exact separation formula \eqref{eq:sep-exact} gives
\[
        \sepd(M_n,U_n)=
        \begin{cases}
        1-\Pbb(E_n=0)-\Pbb(E_n=1),&p=1,\\
        1-\Pbb(E_n=0),&p\ge2.
        \end{cases}
\]
Theorem \ref{thm:coupon-poisson} gives
\[
        \Pbb(E_n=0)\to e^{-\lambda},
        \qquad
        \Pbb(E_n=1)\to \lambda e^{-\lambda}.\qedhere
\]
\end{proof}

\section{Concrete regimes}\label{sec:regimes}

The general theorem is stated in terms of $\lambda_n=n((n-m_n)/n)^{k_n}$ because that is the exact coupon clock.  This section translates it into the common growth regimes for $m_n$.

\subsection[Small blocks: mn=o(n)]{Small blocks: \texorpdfstring{$m_n=o(n)$}{mn=o(n)}}

Let
\[
        \delta_n=-\log\left(1-\frac{m_n}{n}\right).
\]
If $m_n=o(n)$, then $\delta_n\sim m_n/n$.  For fixed $c\in\R$, set
\begin{equation}\label{eq:small-block-time}
        k_n(c)=\left\lfloor\frac{\log n+c}{\delta_n}\right\rfloor.
\end{equation}

\begin{corollary}[Small-block cutoff profile]\label{cor:small-block}
Assume $m_n=o(n)$ and $1\le m_n\le n-1$.  For each fixed \(c\in\R\), let \(k_n(c)\) be as in \eqref{eq:small-block-time}.
\[ \lambda_n = n\left(1-\frac{m_n}{n}\right)^{k_n(c)} \to e^{-c}, \qquad b_n=n-m_n\to\infty. \]
Consequently, for every fixed $p\ge1$,
\[
        \normTV{(Q_{n,p}^{(m_n)})^{*k_n(c)}-U_{n,p}}\to f_p(c),
\]
and
\[
        \sepd\!\left((Q_{n,p}^{(m_n)})^{*k_n(c)},U_{n,p}\right)\to
        \begin{cases}
        1-e^{-e^{-c}}(1+e^{-c}),&p=1,\\
        1-e^{-e^{-c}},&p\ge2.
        \end{cases}
\]
In particular the total variation cutoff location is
\[
        \frac{\log n}{\delta_n}\sim \frac{n}{m_n}\log n
\]
with window
\[
        \frac1{\delta_n}\sim\frac{n}{m_n}.
\]
\end{corollary}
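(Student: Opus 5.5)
The proof reduces to checking the two hypotheses \eqref{eq:main-assumptions} for the choice $k_n=k_n(c)$. Once they hold, Corollary~\ref{cor:metrics} (with $\lambda=e^{-c}$ and the notation \eqref{eq:fp}) gives the total variation limit, and Corollary~\ref{cor:sep} gives the separation limit. The condition $b_n\to\infty$ is immediate: $m_n=o(n)$ gives $b_n=n-m_n=n(1-o(1))\to\infty$.

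The substantive step is $\lambda_n\to e^{-c}$. We write $\lambda_n=\exp(\log n-k_n(c)\delta_n)$ with $\delta_n=-\log(1-m_n/n)>0$. By the definition of the floor,
\[
        \frac{\log n+c}{\delta_n}-1<k_n(c)\le\frac{\log n+c}{\delta_n},
\]
and multiplying by $\delta_n$ gives
\[
        -c\le \log n-k_n(c)\delta_n<-c+\delta_n .
\]
Since $m_n=o(n)$, we have $\delta_n\to0$, so $\log n-k_n(c)\delta_n\to -c$ and therefore $\lambda_n\to e^{-c}\in(0,\infty)$.

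We also check that $k_n(c)\ge0$, so that the convolution power makes sense. For large $n$ we have $\log n+c>0$, so $k_n(c)\ge0$. Moreover $k_n(c)\ge1$ once $\delta_n<\log n+c$, which holds eventually. In any case, Theorem~\ref{thm:coupon-poisson} only needs the limit of $\lambda_n$.

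For the final assertions, $m_n/n\to0$ together with $\log(1-x)\sim -x$ as $x\to0$ gives $\delta_n\sim m_n/n$. Hence the location is $\log n/\delta_n\sim (n/m_n)\log n$ and the window is $1/\delta_n\sim n/m_n$. To read this as cutoff, note that $f_p(c)\to1$ as $c\to-\infty$ and $f_p(c)\to0$ as $c\to\infty$.
- As $c\to\infty$, $\lambda\to0$ and $s_{\ell,p}\to1$, so each $(s_{\ell,p}-1)_+\to0$. Dominated convergence applies because $\sum a_{\ell,p}=1$ and the terms are bounded by $e^{-\lambda}\sum_u (p\lambda)^u$, which is uniformly bounded for small $\lambda$.
- As $c\to-\infty$, the limit $1$ is inherited from the fixed-$m$ analysis of \cite{FengProfiles}.

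These limits are not needed for the displayed convergence statements themselves.

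There is no genuine obstacle here. The only point needing care is the floor function, and the sandwich inequality above handles it because $\delta_n\to0$.
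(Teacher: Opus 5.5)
Your proof is correct and follows the paper's argument. You verify $b_n\to\infty$ and $\lambda_n=\exp\{\log n-k_n(c)\delta_n\}\to e^{-c}$ using $\delta_n\to0$, then invoke Corollaries~\ref{cor:metrics} and~\ref{cor:sep}, with $\delta_n\sim m_n/n$ giving the location and window; your floor sandwich $-c\le\log n-k_n(c)\delta_n<-c+\delta_n$ and your dominated-convergence check that $f_p(c)\to0$ as $c\to\infty$ simply make explicit what the paper writes as $k_n(c)=(\log n+c)/\delta_n+O(1)$ and cites from \cite{FengProfiles}.
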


\begin{proof}
Since $k_n(c)=((\log n+c)/\delta_n)+O(1)$,
\[
        n\left(1-\frac{m_n}{n}\right)^{k_n(c)}
        =\exp\{\log n-k_n(c)\delta_n\}\to e^{-c},
\]
because $\delta_n\to0$.  Moreover, since \(m_n=o(n)\),
\[
        n-m_n\sim n\to\infty.
\]
The profile conclusions follow from Corollaries \ref{cor:metrics} and \ref{cor:sep}.
Since $f_p(c)\to1$ as $c\to-\infty$ and $f_p(c)\to0$ as $c\to+\infty$ by \cite{FengProfiles}, this gives the stated cutoff location and window.
\end{proof}

\subsection{Proportional blocks and lattice phases}

Suppose
\[
        \frac{m_n}{n}\to\alpha\in(0,1),
        \qquad
        \delta_n=-\log\left(1-\frac{m_n}{n}\right)\to\delta=-\log(1-
\alpha)>0.
\]
The cutoff window is now of order one. Therefore the integer part in the time parameter no longer disappears automatically.

\begin{corollary}[Proportional blocks]\label{cor:proportional}
Assume $m_n/n\to\alpha\in(0,1)$, and let $k_n$ be any integer sequence such that
\begin{equation}\label{eq:lattice-a}
        \log n-k_n\delta_n\to a\in\R.
\end{equation}
Then, with $\lambda=e^a$, for every fixed \(p\ge1\),
\[
        \normTV{(Q_{n,p}^{(m_n)})^{*k_n}-U_{n,p}}\to f_p(-a),
\]
and
\[
        \sepd\!\left((Q_{n,p}^{(m_n)})^{*k_n},U_{n,p}\right)\to
        \begin{cases}
        1-e^{-e^a}(1+e^a),&p=1,\\
        1-e^{-e^a},&p\ge2.
        \end{cases}
\]
\end{corollary}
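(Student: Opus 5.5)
The plan is to verify the two hypotheses \eqref{eq:main-assumptions} of Theorem~\ref{thm:coupon-poisson} with limit \(\lambda=e^a\). Once they hold, Corollaries~\ref{cor:metrics} and~\ref{cor:sep} do the rest. The total variation limit is \(\sum_\ell a_{\ell,p}(s_{\ell,p}(e^a)-1)_+\), which equals \(f_p(-a)\) by the definition \eqref{eq:fp}, since \(e^{-(-a)}=e^a\). Substituting \(\lambda=e^a\) into Corollary~\ref{cor:sep} gives exactly the displayed separation formulas.

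For the clock, note that \(q_n=1-m_n/n=e^{-\delta_n}\) by the definition of \(\delta_n\). This holds exactly for each \(n\), not only asymptotically. Hence
\[
        \lambda_n=nq_n^{k_n}=\exp\{\log n-k_n\delta_n\}\to e^{a}\in(0,\infty)
\]
directly from \eqref{eq:lattice-a}. No integer-part bookkeeping is needed, because the hypothesis is phrased on the exact quantity \(\log n-k_n\delta_n\).

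For the complement size, \(b_n=n-m_n=n(1-m_n/n)\sim(1-\alpha)n\to\infty\), since \(\alpha<1\). We also need \(1\le m_n\le n-1\) for large \(n\). This follows from \(m_n/n\to\alpha\in(0,1)\), with \(m_n\) an integer block size in \(\{1,\ldots,n\}\).

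There is no real obstacle here; the only point needing care is the interpretation. Given \(\delta_n\to\delta>0\), condition \eqref{eq:lattice-a} forces \(k_n=(\log n-a)/\delta_n+o(1)\). Since the window is \(O(1)\), such sequences exist only along subsequences where the fractional part of \(\log n/\delta_n\) converges; this is the lattice phase. The corollary is stated conditionally on \eqref{eq:lattice-a}, so the proof does not need this remark, though I would include it to explain why \(a\) rather than an arbitrary \(c\) parametrizes the profile.
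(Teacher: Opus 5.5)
Your proposal is correct and follows the paper's own proof: you use the exact identity \(\lambda_n=nq_n^{k_n}=\exp\{\log n-k_n\delta_n\}\to e^a\) and the estimate \(b_n\sim(1-\alpha)n\to\infty\), then apply Corollaries~\ref{cor:metrics} and~\ref{cor:sep} with \(\lambda=e^a\), so the total variation limit is \(f_p(-a)\). Your closing lattice comment is not needed for the proof, and it matches the remark the paper places after this corollary.
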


\begin{proof}
Condition \eqref{eq:lattice-a} says precisely that
\[
        \lambda_n=n\exp(-k_n\delta_n)\to e^a.
\]
Also
\[
        n-m_n\sim (1-\alpha)n\to\infty.
\]
Apply Corollaries~\ref{cor:metrics} and~\ref{cor:sep}.
\end{proof}

\begin{remark}[The lattice effect]
The proportional-block assumption \(m_n/n\to\alpha\in(0,1)\) does not by itself determine a limiting profile; the additional condition
\[
        \log n-k_n\delta_n\to a
\]
is precisely the full-sequence critical-window condition, since it is equivalent to
\[
        \lambda_n
        =
        \exp\{\log n-k_n\delta_n\}
        \to e^a.
\]
For fixed \(c\in\R\), we might try
\[
        k_n(c)=\left\lfloor\frac{\log n+c}{\delta_n}\right\rfloor.
\]
Then
\[
        \log n-k_n(c)\delta_n=-c+\theta_n(c)\delta_n,
        \qquad
        \theta_n(c)=\left\{\frac{\log n+c}{\delta_n}\right\}\in[0,1),
\]
where \(\{\cdot\}\) denotes fractional part. Since \(\delta_n\to\delta>0\), the term \(\theta_n(c)\delta_n\) need not vanish. Thus if
\(\theta_n(c)\) does not converge, the total variation distance need not have a full-sequence limit. Along any subsequence for which \(\theta_n(c)\to\theta\), Corollary~\ref{cor:proportional} gives the subsequential limit \(f_p(c-\theta\delta)\).
\end{remark}

\subsection{Near-full blocks}

The theorem also covers near-full blocks, where each shuffle moves almost the entire deck. In the corollary below, the assumption
\[
        n-m_n\sim a n^{1-1/r}
\]
with \(a>0\) and integer \(r\ge2\) is indeed near-full: since \(n^{1-1/r}=o(n)\), it implies \(n-m_n=o(n)\), and hence \(m_n/n\to1\).

\begin{corollary}[Polynomial-size complements]\label{cor:near-full}
Fix an integer $r\ge2$ and a number $a>0$. Suppose
\[
       n-m_n\sim a n^{1-1/r},
        \qquad
        k_n=r.
\]
Then
\[
        E_r^{(m_n)}\Rightarrow \Poisson(a^r),
\]
and for every fixed $p\ge1$,
\[
        \normTV{(Q_{n,p}^{(m_n)})^{*r}-U_{n,p}}\to f_p(-r\log a),
\]
while
\[
        \sepd\!\left((Q_{n,p}^{(m_n)})^{*r},U_{n,p}\right)\to
        \begin{cases}
        1-e^{-a^r}(1+a^r),&p=1,\\
        1-e^{-a^r},&p\ge2.
        \end{cases}
\]
\end{corollary}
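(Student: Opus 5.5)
The plan is to verify the two hypotheses \eqref{eq:main-assumptions} of Theorem~\ref{thm:coupon-poisson} with \(k_n=r\) fixed and \(\lambda=a^r\). Then Theorem~\ref{thm:coupon-poisson}, Corollary~\ref{cor:metrics} and Corollary~\ref{cor:sep} give the conclusions directly. This is the same template as Corollaries~\ref{cor:small-block} and~\ref{cor:proportional}; only the computation of \(\lambda_n\) differs.

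First, the complement size diverges. Since \(b_n=n-m_n\sim a n^{1-1/r}\), \(a>0\), and \(1-1/r\ge 1/2>0\), we get \(b_n\to\infty\). The standing assumption \(1\le m_n\le n-1\) holds for large \(n\), because \(b_n=o(n)\) and \(b_n\to\infty\), so \(b_n\in[1,n-1]\) eventually.

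Second, compute the clock with \(k_n=r\):
\[
        \lambda_n=n\left(\frac{b_n}{n}\right)^{r}=\frac{b_n^r}{n^{r-1}}
        =\left(\frac{b_n}{n^{1-1/r}}\right)^{r}\longrightarrow a^r\in(0,\infty),
\]
because \(b_n/n^{1-1/r}\to a\) and \(t\mapsto t^r\) is continuous. Both hypotheses of Theorem~\ref{thm:coupon-poisson} therefore hold. That theorem gives \(E_r^{(m_n)}\Rightarrow\Poisson(a^r)\).

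Third, the shuffle statements follow. Corollary~\ref{cor:metrics} gives \(\normTV{M_n-U_n}\to\sum_\ell a_{\ell,p}(s_{\ell,p}(a^r)-1)_+\). Writing \(a^r=e^{-c}\) gives \(c=-r\log a\), so by \eqref{eq:fp} the limit is \(f_p(-r\log a)\). Corollary~\ref{cor:sep} with \(\lambda=a^r\) gives the separation limits.

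There is no real obstacle here. The only point needing care is the exponent bookkeeping \(n\cdot n^{-r/r}\cdot n^{r-1}\), which shows that exactly \(r\) steps make \(\lambda_n\) of order one. One might also remark that for \(k_n<r\) we have \(\lambda_n\to\infty\), and for \(k_n>r\) we have \(\lambda_n\to0\). This justifies calling \(r\) the nontrivial time, though it is not required for the statement.
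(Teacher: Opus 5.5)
Your proposal is correct and follows essentially the same route as the paper: verify $b_n\to\infty$, compute $\lambda_n=(b_n/n^{1-1/r})^r\to a^r$, and then apply Theorem~\ref{thm:coupon-poisson} and Corollaries~\ref{cor:metrics} and~\ref{cor:sep}, translating $\lambda=a^r$ into $c=-r\log a$. Your extra check that $1\le m_n\le n-1$ holds eventually is a small point the paper leaves implicit.
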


\begin{proof}
The assumption gives
\[
        b_n= n-m_n\sim a n^{1-1/r}\to\infty.
\]
Moreover, since \(k_n=r\),
\[
        \lambda_n
        =
        n\left(\frac{n-m_n}{n}\right)^r
        =
        n\left(\frac{b_n}{n}\right)^r
        =
        \left(\frac{b_n}{n^{1-1/r}}\right)^r
        \to a^r.
\]
Apply Theorem~\ref{thm:coupon-poisson}, Corollary~\ref{cor:metrics}, and Corollary~\ref{cor:sep}.
\end{proof}

\begin{example}
Let \(a>0\), \(m_n=n-a\sqrt n+o(\sqrt n)\), and \(k_n=2\).  Then
\[
        n-m_n\sim a\sqrt n=a n^{1-1/2},
\]
so this is the near-full regime with \(r=2\).  Moreover,
\[
        \lambda_n
        =
        n\left(\frac{n-m_n}{n}\right)^2
        \sim
        n\left(\frac{a\sqrt n}{n}\right)^2
        =
        a^2.
\]
Hence
\[
        E_2^{(m_n)}\Rightarrow \Poisson(a^2).
\]
Since \(a^2=e^{-c}\) gives \(c=-2\log a\), the limiting total variation
distance is
\[
        \normTV{(Q_{n,p}^{(m_n)})^{*2}-U_{n,p}}
        \to
        f_p(-2\log a).
\]
Thus two near-full shuffles can still leave a nontrivial amount of memory.
\end{example}

\section{Closing remarks}

The profiles are governed by the coupon clock
\[
        n\left(1-\frac{m_n}{n}\right)^{k_n}.
\]
At the critical scale where this quantity tends to a finite nonzero limit, the required non-microscopic-complement condition is
\[
        n-m_n\to\infty,
        \qquad\text{equivalently}\qquad
        \frac{k_nm_n}{n(n-m_n)}\to0.
\]
Section~\ref{sec:regimes} gives three concrete instances of this criterion.

It would be natural to push further in two directions. First, we can ask for quantitative error terms in total variation, using Stein bounds for group drawings in the spirit of \cite{BetkenThale}. Second, we can ask what replaces the Poisson profile when $\gamma_n$ does not vanish. Remark~\ref{rem:condition-needed} gives a non-Poisson boundary case when $n-m_n$ is fixed and $k_n=1$.

\section*{Acknowledgments}
The author thanks Jason Fulman for helpful feedback and for suggesting references.

\end{document}